\newtheorem{thm}{Theorem}
\newtheorem{prop}{Proposition}
\begin{document}

\title{The hafnian of Toeplitz matrices of a special type, perfect matchings and Bessel polynomials}
\author{Dmitry Efimov\thanks{e-mail: dmefim@mail.ru}\\ Institute of Physics and Mathematics,\\ Komi Science Centre UrD RAS,\\
Syktyvkar, Russia}
\date{}

\maketitle

\begin{abstract}
We present a simple and convenient analytical formula 
for efficient exact computation of the hafnian of Toeplitz matrices of a special type.
An interpretation of the obtained results is given in the language of perfect matchings 
and Bessel polynomials.
\end{abstract}

\section*{Introduction} 
Let $A=(a_{ij})$ be a symmetric matrix of order $n=2m$ over a commutative associative ring.
Its hafnian is defined as 
$$
  \textrm{Hf}(A)=\sum_{(i_1i_2|\dots|i_{n-1}i_n)} a_{i_1i_2}\dots a_{i_{n-1}i_n},
$$
where the sum runs over all partitions of the set $\{1,2,\dots, n\}$
into disjoint pairs $(i_1i_2),\dots,(i_{n-1}i_n)$  
up to the order of pairs, and the order of elements in each pair.
So, for example, if $n=4$ then $\textrm{Hf}(A)=a_{12}a_{34}+a_{13}a_{24}+a_{14}a_{23}$.
Equivalently, one can define the hafnian as
$$
  \textrm{Hf}(A)=\frac{1}{m!2^m}\sum_{\sigma\in S_n} a_{\sigma(1),\sigma(2)}\dots a_{\sigma(n-1),\sigma(n)},
$$
where the sum runs over all permutations of the set $\{1,2,\dots, n\}$.
Note that diagonal elements of the matrix are not present in the definition of the hafnian.
We will take them equal to zero for convenience.
The hafnian was introduced by E.R. Caianiello in one of his works on quantum field theory \cite{Cai1}.
He gave the name to the new matrix function in honor of Copenhagen (Lat. Hafnia),
the place where the idea of this mathematical concept first occurred to him.
By this name he also emphasized  connection with Pfaffian introduced by A. Cayley in 19th century,
from which the hafnian differs only by the signs of some components.
Later it became clear that the hafnian also has a useful combinatorial property
related to solving an important problem in graph theory: 
if $M$ is the adjacency matrix of an unordered graph with even number of vertices, 
then $\textrm{Hf}(M)$ equals the total number of perfect matchings of the graph.

Unfortunately  the widespread use of the hafnian is limited by the fact that
there are no effective algorithms for its calculation in the general case.
Thus, in the recent work \cite{Titan}, the currently fastest exact algorithm 
to compute the hafnian of an arbitrary complex $n\times n$ matrix is described.
It runs in $O(n^32^{n/2})$ time.
And numerical benchmarks on the Titan supercomputer
(the 7th place in the Top500 ranking as  June 2018) 
indicated that it would require  about a month and a half to compute 
the hafnian of a randomly generated $100\times 100$ complex matrix using this algorithm.

Since in the general case calculation of the hafnian has a high computational complexity,
the problem is actual of finding efficient analytical formulas expressing the hafnian for special classes of matrices.
Thus, in many important cases (e.g., when considering adjacency matrices of planar graphs) 
one can reduce calculation of the hafnian of a given matrix to a much more efficient calculation of the Pfaffian 
for another matrix, associated with the former one by certain simple transformations \cite{Kuper}.
Recall that a matrix is called Toeplitz if all elements of every of its diagonal parallel to the main one are the same.
The paper \cite{Moshe} presents an algorithm for calculating the hafnian 
of banded Toeplitz matrices of order $n$ and bandwidth $m$ which runs in $O(2^{3m}\log n)$ time.

In this work we obtain an efficient analytical formula for calculating the hafnian 
of Toeplitz matrices of a special type, different from the one mention above.
In a special case this formula reduces to computation of the value of the Bessel polynomial 
of the corresponding degree at a certain point. 
Using this formula, one can calculate the hafnian in linear time.

\section*{The main part}
To begin with, consider two properties of the hafnian.
The first property is quite obvious.

\begin{prop}\label{prop1}
 Let $A$ be a symmetric matrix of order $2m$ over a commutative associative ring $R$, and $c\in R$. Then
 \begin{equation}\label{16.10.18_1}
  \mathrm{Hf}(cA)=c^m\mathrm{Hf}(A).
 \end{equation}
\end{prop}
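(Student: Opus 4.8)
The plan is to exploit the fact that the hafnian is a homogeneous polynomial of degree $m$ in the entries of $A$. In the defining sum
$$\mathrm{Hf}(A)=\sum_{(i_1i_2|\dots|i_{n-1}i_n)} a_{i_1i_2}\dots a_{i_{n-1}i_n},$$
each summand corresponds to a partition of the $n=2m$ indices into exactly $m$ unordered pairs, and is therefore a product of precisely $m$ matrix entries.

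First I would write out $\mathrm{Hf}(cA)$ using this same sum, observing that the $(i,j)$ entry of $cA$ is $c\,a_{ij}$. Each summand then takes the form $(c\,a_{i_1i_2})(c\,a_{i_3i_4})\dots(c\,a_{i_{n-1}i_n})$, a product of $m$ factors, each carrying one copy of $c$. Because $R$ is commutative and associative, I may reorder the factors freely and collect the $m$ copies of $c$ into a single coefficient $c^m$, so that the summand equals $c^m\,a_{i_1i_2}\dots a_{i_{n-1}i_n}$.

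Finally, since $c^m$ is the same constant for every partition, I would pull it out of the sum to obtain $\mathrm{Hf}(cA)=c^m\sum a_{i_1i_2}\dots a_{i_{n-1}i_n}=c^m\,\mathrm{Hf}(A)$, as claimed. There is no genuine obstacle here; the only point requiring care is the appeal to commutativity of $R$, which is what legitimizes both the rearrangement of factors within each product and the extraction of $c^m$ from the sum.
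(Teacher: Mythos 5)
Your proof is correct: the hafnian is indeed a sum over pair partitions in which each term is a product of exactly $m$ entries, so scaling every entry by $c$ contributes a factor $c^m$ to each term. The paper states this proposition without proof, calling it ``quite obvious,'' and your argument is precisely the evident one it has in mind.
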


Let $Q_{k,n}$ denote the set of all (unordered)  $k$-element subsets of the set $\{1,2,\dots,n\}$.
Let  $A$ be a matrix of order $n$ and $\alpha\in Q_{k,n}$. 
We denote by $A[\alpha]$ the submatrix of $A$ formed by the rows and columns of $A$ with numbers in $\alpha$, 
and by $A(\alpha)$ the submatrix of $A$ formed from $A$ by removing the rows and columns with numbers in $\alpha$.
The following property proved in \cite{Efimov}:

\begin{prop}\label{prop2}
Let $A$, $B$ be symmetric matrices of order $2m$. Then 
\begin{equation}\label{24.11_3}
 \mathrm{Hf}(A+B)=\sum_{k=0}^m \sum_{\alpha\in Q_{2k,2m}}\mathrm{Hf}(A[\alpha])\mathrm{Hf}(B(\alpha)),
\end{equation}
where $\mathrm{Hf}(A[\alpha])=1$ if $\alpha\in Q_{0,2m}$, 
and $\mathrm{Hf}(B(\alpha))=1$ if $\alpha\in Q_{2m,2m}$.
\end{prop}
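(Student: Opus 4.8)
The plan is to argue directly from the combinatorial definition of the hafnian as a sum over perfect matchings, expand each factor $(a_{ij}+b_{ij})$ by distributivity, and then regroup the resulting monomials according to which vertices are covered by the $A$-entries. First I would write
$$
  \mathrm{Hf}(A+B)=\sum_{P}\ \prod_{\{i,j\}\in P}(a_{ij}+b_{ij}),
$$
where $P$ ranges over all partitions of $\{1,\dots,2m\}$ into $m$ unordered pairs. Expanding the product over the $m$ pairs of a fixed matching $P$, each resulting monomial corresponds to a choice, for every pair $\{i,j\}\in P$, of either the $A$-entry $a_{ij}$ or the $B$-entry $b_{ij}$. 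Recording the subset $S\subseteq P$ of pairs for which the $A$-entry is selected, this gives
$$
  \mathrm{Hf}(A+B)=\sum_{P}\ \sum_{S\subseteq P}\ \prod_{\{i,j\}\in S}a_{ij}\ \prod_{\{i,j\}\in P\setminus S}b_{ij}.
$$

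The key step is a change of summation variable. For a pair $(P,S)$, let $\alpha$ be the set of vertices covered by the pairs in $S$; since $S$ consists of $k:=|S|$ disjoint pairs, $\alpha\in Q_{2k,2m}$, and the complementary pairs $P\setminus S$ cover exactly the complement $\bar\alpha$. I would verify that $(P,S)\mapsto(\alpha,\,S,\,P\setminus S)$ is a bijection onto the set of triples $(\alpha,S,T)$ in which $\alpha\in Q_{2k,2m}$, the set $S$ is a perfect matching of $\alpha$, and $T$ is a perfect matching of $\bar\alpha$; the inverse map simply sets $P=S\cup T$. Under this correspondence the monomial above factors as $\bigl(\prod_{\{i,j\}\in S}a_{ij}\bigr)\bigl(\prod_{\{i,j\}\in T}b_{ij}\bigr)$.

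Summing first over $S$ and $T$ for a fixed $\alpha$, the two inner sums are precisely $\mathrm{Hf}(A[\alpha])$ and $\mathrm{Hf}(B(\alpha))$, the latter because $B(\alpha)$ is by definition the submatrix of $B$ on the complementary index set $\bar\alpha$. This yields formula \eqref{24.11_3}. The extreme cases fall out of the conventions stated in the proposition: $k=0$ forces $\alpha=\emptyset$, with the empty $A$-matching contributing the factor $\mathrm{Hf}(A[\alpha])=1$, while $k=m$ forces $\alpha=\{1,\dots,2m\}$, with the empty $B$-matching contributing $\mathrm{Hf}(B(\alpha))=1$.

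The only point requiring genuine care—and hence the main (if modest) obstacle—is confirming that the reindexing is truly a bijection: that every pair of matchings on complementary vertex sets glues to a unique perfect matching of the whole set, and that no monomial of the expanded product is counted twice or omitted. Once this is established, the factorization of the double sum into the product of inner hafnians is immediate, and the claimed identity follows directly.
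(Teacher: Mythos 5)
Your proof is correct, and it is worth noting that it does not follow the paper's route, because the paper contains no proof of Proposition \ref{prop2} at all: the identity is quoted from \cite{Efimov}, where it is derived within a commutative analogue of the Grassmann algebra, in which the hafnian appears as a coefficient in a formal algebraic expansion and addition formulas of this kind follow from short algebraic manipulations, much as Pfaffian expansion identities follow from Grassmann calculus. Your argument instead works directly from the matching definition: expanding each factor $(a_{ij}+b_{ij})$ over the pairs of a fixed perfect matching $P$, recording the subset $S$ of pairs taking the $A$-entry, and reindexing by the covered vertex set $\alpha$ is a sound regrouping, and the bijectivity you single out as the main point of care is indeed immediate --- given $\alpha\in Q_{2k,2m}$, a perfect matching $S$ of $\alpha$ and a perfect matching $T$ of its complement are disjoint, so $P=S\cup T$ is a perfect matching of $\{1,\dots,2m\}$, and every pair $(P,S)$ arises from exactly one such triple, so each monomial of the expanded product is counted exactly once. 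The edge cases $k=0$ and $k=m$ are correctly absorbed by the stated conventions for the empty hafnian. What your direct combinatorial expansion buys is elementarity and self-containedness: it is valid verbatim over any commutative ring and exposes the identity as nothing more than distributivity plus a bijective reindexing of monomials. What the algebraic approach of \cite{Efimov} buys is a uniform formal framework in which this identity sits alongside other hafnian analogues of Pfaffian identities, obtained without constructing bijections case by case.
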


To prove our main result, we need also one combinatorial property of path graphs.
Recall that {\itshape a path graph} is a graph that can be drawn so that all of its vertices and edges lie on a single straight line
and all neighboring vertices are adjacent. We denote the path graph with $n$ vertices by $P_n$ (Fig. \ref{Efimov1}).
\begin{figure}[!ht]
 \begin{center}
 \includegraphics[scale=0.9]{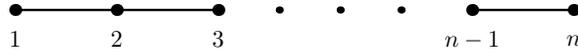}
 \caption{The path graph  $P_n$}\label{Efimov1}
 \end{center}
\end{figure}
Let $k$ be a non-negative integer less than $n/2$, 
and  let $E_n^k$ denote the number of ways to select $k$ edges in $P_n$ so that 
there are no two selected edges with a common vertex. 

\begin{prop}\label{08.02_1}
The value of $E_n^k$ equals the binomial coefficient $C_{n-k}^k$.
\end{prop}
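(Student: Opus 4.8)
The plan is to translate the geometric condition ``no two chosen edges share a vertex'' into a purely numerical condition about subsets of integers, and then count those subsets by a standard gap argument. Label the edges of $P_n$ by $1,2,\dots,n-1$, where edge $i$ joins vertices $i$ and $i+1$. Two distinct edges $i$ and $j$ share a vertex precisely when $|i-j|=1$; hence a selection of $k$ pairwise non-adjacent edges corresponds bijectively to a choice of integers $i_1 < i_2 < \dots < i_k$ from $\{1,\dots,n-1\}$ with $i_{t+1}-i_t \ge 2$ for every $t$, i.e.\ to a $k$-subset of $\{1,\dots,n-1\}$ containing no two consecutive integers.

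First I would count such subsets. The standard device is the shift $j_t = i_t - (t-1)$, which is strictly increasing and turns the gap condition $i_{t+1}-i_t \ge 2$ into the plain condition $j_{t+1} > j_t$, while keeping $j_1 \ge 1$ and $j_k \le (n-1)-(k-1) = n-k$. This sets up a bijection between the admissible subsets and arbitrary $k$-subsets of $\{1,\dots,n-k\}$, of which there are exactly $C_{n-k}^k$, giving the claim.

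As a cross-check, or as an alternative self-contained route, I would verify the recurrence $E_n^k = E_{n-1}^k + E_{n-2}^{k-1}$, obtained by conditioning on whether the last edge $\{n-1,n\}$ is used: if it is omitted we are counting size-$k$ matchings of $P_{n-1}$, and if it is used then vertex $n-1$ is occupied, edge $\{n-2,n-1\}$ is forbidden, and we are counting size-$(k-1)$ matchings of $P_{n-2}$. One then checks that $C_{n-k}^k$ obeys the same recurrence via Pascal's rule, $C_{n-k}^k = C_{n-1-k}^k + C_{n-1-k}^{k-1}$, together with the base cases $E_n^0 = 1$ and $E_n^1 = n-1$.

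The step I expect to require the most care is not conceptual but bookkeeping: getting the off-by-one exactly right when passing from the $n-1$ edges to the reduced ground set of size $n-k$, and confirming that the hypothesis $k < n/2$ keeps the range non-degenerate so that no admissible configurations are lost.
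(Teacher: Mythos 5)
Your proof is correct and is essentially the paper's own argument in arithmetic form: the shift $j_t=i_t-(t-1)$ on edge labels is precisely the inverse of the paper's pictorial bijection, which inserts an extra edge after each of the first $k-1$ selected edges to pass between arbitrary $k$-edge selections and pairwise non-adjacent ones (the paper states this as $C_{n-1}^k=E_{n+k-1}^k$, which re-indexes to your $E_n^k=C_{n-k}^k$). Your bookkeeping is right ($j_1\ge 1$, $j_k\le n-k$, and $k<n/2$ keeps the range non-degenerate), and the recurrence $E_n^k=E_{n-1}^k+E_{n-2}^{k-1}$ checked against Pascal's rule is a sound independent confirmation, though not needed.
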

\begin{proof}
It is obvious that one can select $k$ different edges in $P_n$ in $C_{n-1}^k$ ways.
Now we replace the given path graph by a new one according to the following rule:
we insert an additional edge after each of the first $k-1$ selected edges, counting left to right.
As a result, we obtain a path graph with $n+k-1$ vertices and $k$ selected edges,
any two of which have no common vertices (Fig. \ref{Efimov2}). 
\begin{figure}[!ht]
 \begin{center}
\includegraphics[scale=0.9]{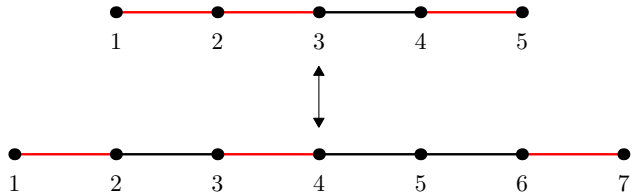}
 \caption{Transition between the path graphs $P_5$ and $P_7$ with $3$ selected (in red) edges}\label{Efimov2}
 \end{center}
\end{figure}
It is not hard to see that such a procedure and its inverse establishes a one-to-one correspondence
between samples of $k$ edges in a path graph with $n$ vertices and samples of $k$ edges, any two of which have no common vertices, 
in a path graph with $n+k-1$ vertices.
Therefore,  $C_{n-1}^k=E_{n+k-1}^k$.  It follows that $E_n^k=C_{n-k}^k$.
\end{proof} 

Now we can formulate and prove the main result.

\begin{thm}
Let $R$ be a commutative associative ring with unit element, and $a,b\in R$.
Consider a symmetric matrix $T_{a,b}$ of order $2m$   
whose elements on the main diagonal are zero, the elements on the subdiagonal and on the superdiagonal are equal to $a$, 
and all others elements are equal to $b$:
\begin{equation}\label{26.10.18_1}
T_{a,b}=
\left(
\begin{array}{cccc}
0&a&&b\\
a&\ddots&\ddots&\\	
&\ddots&\ddots&a\\
b&&a&0
\end{array}
\right).
\end{equation}
Assuming $0^0=1$, the following equality holds:
\begin{equation}\label{17.10.18_2}
  \mathrm{Hf}(T_{a,b})=\sum_{k=0}^m (a-b)^{m-k}b^k\frac{(m+k)!}{k!(m-k)!2^k}\ .
\end{equation}
\end{thm}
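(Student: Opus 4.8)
The plan is to split $T_{a,b}$ into two structurally simple symmetric matrices and exploit Proposition~\ref{prop2}. Write $T_{a,b}=A+B$, where $B$ is the matrix with $0$ on the diagonal and $b$ in every off-diagonal position, so that $A=T_{a,b}-B$ has $a-b$ on the sub- and superdiagonal and $0$ everywhere else. In graph-theoretic terms $B$ is $b$ times the adjacency matrix of the complete graph $K_{2m}$, while $A$ is $(a-b)$ times the adjacency matrix of the path graph $P_{2m}$ on vertices $1,\dots,2m$. Applying Proposition~\ref{prop2} gives
\begin{equation*}
  \mathrm{Hf}(T_{a,b})=\sum_{k=0}^m\sum_{\alpha\in Q_{2k,2m}}\mathrm{Hf}(A[\alpha])\,\mathrm{Hf}(B(\alpha)),
\end{equation*}
so the task reduces to evaluating the two families of sub-hafnians and then summing over $\alpha$.

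First I would handle the $B$-factor. Deleting the rows and columns indexed by $\alpha$ leaves a matrix of the same homogeneous form on the remaining $2(m-k)$ indices, so $B(\alpha)$ is $b$ times the adjacency matrix of $K_{2(m-k)}$, independent of which $\alpha$ was removed. By Proposition~\ref{prop1} together with the fact that the hafnian of an adjacency matrix counts perfect matchings, $\mathrm{Hf}(B(\alpha))=b^{\,m-k}\,\dfrac{(2(m-k))!}{2^{\,m-k}(m-k)!}$, the last factor being the number of perfect matchings of $K_{2(m-k)}$. For the $A$-factor, $A[\alpha]=(a-b)\,P[\alpha]$, where $P[\alpha]$ is the adjacency matrix of the subgraph of $P_{2m}$ induced on $\alpha$; Proposition~\ref{prop1} then gives $\mathrm{Hf}(A[\alpha])=(a-b)^k\,\mathrm{Hf}(P[\alpha])$, and $\mathrm{Hf}(P[\alpha])$ is the number of perfect matchings of that induced subgraph.

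The crux is the inner sum $\sum_{\alpha\in Q_{2k,2m}}\mathrm{Hf}(P[\alpha])$. I would argue that a pair consisting of a $2k$-subset $\alpha$ together with a perfect matching of $P[\alpha]$ is exactly the datum of $k$ pairwise non-adjacent edges of $P_{2m}$: the matching edges are edges of the path, and their $2k$ endpoints recover $\alpha$, so the vertex set $\alpha$ is redundant and no overcounting occurs across different $\alpha$. Hence this inner sum enumerates the $k$-matchings of $P_{2m}$, i.e.\ it equals $E_{2m}^k$, which by Proposition~\ref{08.02_1} is $C_{2m-k}^k$. Substituting everything back yields
\begin{equation*}
  \mathrm{Hf}(T_{a,b})=\sum_{k=0}^m (a-b)^k b^{\,m-k}\,C_{2m-k}^{\,k}\,\frac{(2(m-k))!}{2^{\,m-k}(m-k)!}.
\end{equation*}
Reindexing by $k\mapsto m-k$ and using the identity $C_{m+k}^{\,m-k}\cdot\frac{(2k)!}{2^k k!}=\frac{(m+k)!}{k!(m-k)!\,2^k}$ should produce \eqref{17.10.18_2} exactly. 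The step I expect to be the main obstacle is pinning down the correspondence in the inner sum cleanly — in particular verifying that each $k$-matching determines its own vertex set so that the count is genuinely $E_{2m}^k$ — after which the remaining manipulations are routine binomial algebra.
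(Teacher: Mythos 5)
Your proposal is correct and follows essentially the same route as the paper: the identical decomposition $T_{a,b}=J_b+U_{a-b}$, Proposition~\ref{prop2} to expand the hafnian, and Proposition~\ref{08.02_1} to count non-adjacent edge selections in $P_{2m}$. The only difference is that you swap the roles of the two summands in Proposition~\ref{prop2} (restricting the path part to $\alpha$ and deleting from the complete part, where the paper does the reverse), which merely reindexes the sum by $k\mapsto m-k$; your bijection between pairs $(\alpha,\,\text{perfect matching of }P[\alpha])$ and $k$-matchings of $P_{2m}$ is sound, so the step you flagged as a potential obstacle is in fact unproblematic.
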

\begin{proof}
Let $J_q$ denote a symmetric matrix of order $2m$
whose elements on the main diagonal are zero, and all others elements equal $q$.
Directly from the definition of the hafnian it follows that 
\begin{equation}\label{09.10.18_1}
 \mathrm{Hf}(J_q)=q^m\frac{(2m)!}{m!2^m}.
\end{equation}
Let $U_q$ denote the matrix of order $2m$ 
whose elements on the subdiagonal and on the superdiagonal are equal to $q$, and all others elements are zeros. 
Since $T_{a,b}=J_b+U_{a-b}$, we can write the following chain of equalities 
using the formulas (\ref{16.10.18_1}), (\ref{24.11_3}), and (\ref{09.10.18_1}):
\begin{equation}\label{16.10.18_2}
\begin{split}
  \mathrm{Hf}(T_{a,b})=\mathrm{Hf}(J_b+U_{a-b})=&\sum_{k=0}^m \sum_{\alpha\in Q_{2k,2m}}\mathrm{Hf}(J_b[\alpha])\mathrm{Hf}(U_{a-b}(\alpha))=\\
	&=\sum_{k=0}^m (a-b)^{m-k}b^k\frac{(2k)!}{k!2^k}\sum_{\alpha\in Q_{2k,2m}}\mathrm{Hf}(U_1(\alpha)).
\end{split}
\end{equation}
Here we use the fact that $J_b[\alpha]$ has the same form as the initial matrix $J_b$,
i.e.,  $J_b[\alpha]$ is a symmetric matrix of order $2k$ whose elements on the main diagonal are zeros, and all others elements are equal to $b$.

Let $u_{ij}$ denote the elements of $(0,1)$-matrix $U_1$. 
If $\alpha\in Q_{2k,2m}$,  then the matrix $U_1(\alpha)$ has the order $2m-2k$, and, by the definition,
\begin{equation}\label{17.10.18_1}
  \sum_{\alpha\in Q_{2k,2m}}\textrm{Hf}(U_1(\alpha))=\sum_{\alpha\in Q_{2k,2m}}\sum_P u_{i_1i_2}\dots u_{i_{2m-2k-1}i_{2m-2k}},
\end{equation}
where the inner sum runs over all partitions $P$ of the set $\{1,2,\dots,2m\}\backslash\alpha$ 
into disjoint pairs $(i_1i_2),\dots,(i_{2m-2k-1}i_{2m-2k})$  
up to an order of pairs and an order of elements in each pair.
Since only the elements $u_{i,i+1}$ and $u_{i+1,i}$ of $U_1$ are equal to $1$,
the term $u_{i_1i_2}\dots u_{i_{2m-2k-1}i_{2m-2k}}$ in the sum (\ref{17.10.18_1})
equals $1$ if and only if each pair in the partition $(i_1i_2),\dots,(i_{2m-2k-1}i_{2m-2k})$ 
consists of two neighboring indexes, otherwise this term equals zero.
It follows that the sum (\ref{17.10.18_1}) equals the number of different ways
to select in the set $\{1,2,\dots,2m\}$ a collection of $m-k$ disjoint pairs $(i_1,i_1+1),\dots,(i_{m-k},i_{m-k}+1)$, 
up to an order of pairs. 
And this is nothing else than the number of ways to select $m-k$ edges in the path graph $P_{2m}$
so that any two edges do not have common vertices. 
By Proposition \ref{08.02_1} this number equals $C_{2m-m+k}^{m-k}=C_{m+k}^{m-k}$.
Thus substituting this value in (\ref{16.10.18_2}), we get the desired expression:
$$
 \mathrm{Hf}(T_{a,b})=\sum_{k=0}^m (a-b)^{m-k}b^k\frac{(2k)!}{k!2^k}C_{m+k}^{m-k}=\sum_{k=0}^m (a-b)^{m-k}b^k\frac{(m+k)!}{k!(m-k)!2^k}.
$$
\end{proof}

Now we make a few comments about the result.
Let $a,b$ be non-negative integers, 
and let $G_{a,b}$ denote the graph with the adjacency matrix $T_{a,b}$.
If one draws $G_{a,b}$ in the form of an arc diagram, then the neighboring vertices will be connected 
by $a$ arcs while all the other pairs of vertices by $b$ arcs. 
In this case the formula (\ref{17.10.18_2}) expresses the number of perfect matchings of the graph $G_{a,b}$.

Let $a=0$, $b=1$. The graph $G_{0,1}$ with $n$ vertices is the complement of the path graph $P_n$ (Fig. \ref{Efimov3}).
And it is not hard to see that perfect matchings of the graph $G_{0,1}$ with $2m$ vertices
are loopless linear chord diagrams with $m$ chords considered in \cite{Krasko}.
From (\ref{17.10.18_2}) we get 
\begin{equation}\label{16.04.19_1}
 \mathrm{Hf}(T_{0,1})=\sum_{k=0}^m (-1)^{m-k}\frac{(m+k)!}{k!(m-k)!2^k}.
\end{equation}
Thus, the formula (\ref{16.04.19_1}) expresses the number of loopless linear chord diagrams with $m$ chords (the sequence $A278990$ in \cite{oeis}). 
\begin{figure}[!ht]
 \begin{center}
 \includegraphics[scale=0.6]{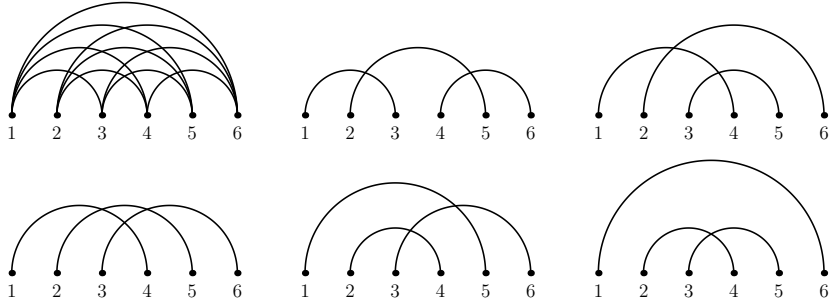}
 \caption{The graph $G_{0,1}$ with six vertices and all its perfect matchings}\label{Efimov3}
 \end{center}
\end{figure}

Consider now the graph $G_{2,1}$. 
If one calculates by (\ref{17.10.18_2}) the hafnian of its adjacency matrix for consecutive $m$, starting with $m=1$, we get the sequence:
$$
2,7,37,266,2431,27007,\dots 
$$ 
Its $k$-th member equals the number of perfect matchings of the graph 
$G_{2,1}$ with $2k$ vertices (Fig. \ref{Efimov4}). 
\begin{figure}[!ht]
 \begin{center}
 \includegraphics[scale=0.6]{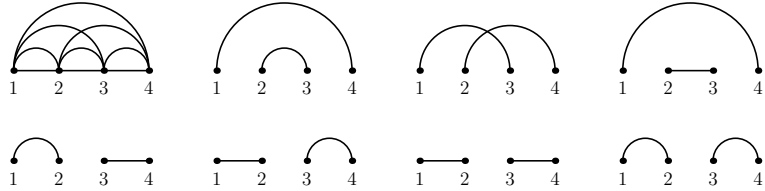}
 \caption{The graph $G_{2,1}$ with four vertices and all its perfect matchings}\label{Efimov4}
 \end{center}
\end{figure}
Note that this sequence has the notation $A001515$ in \cite{oeis},
however  its description does not contain the  interpretation given here.

Recall (see \cite{Bessel1}, \cite{Bessel2}) that the Bessel polynomial of degree $m$ is a polynomial of the form: 
$$
y_m(x)=\sum_{k=0}^m \frac{(m+k)!}{k!(m-k)!}\left(\frac{x}{2}\right)^k.
$$
It follows from (\ref{17.10.18_2}) that the hafnian of the matrix $T_{b+1,b}$ of order $2m$ equals the value of the Bessel polynomial of degree $m$ at $x=b$:
$$
\mathrm{Hf}(T_{b+1,b})=y_m(b).
$$
This is a rather curious and unexpected fact, the explanation of which is not clear so far.


\section*{Conclusion}

We obtained a simple analytical formula 
for efficient exact calculation of the hafnian of Toeplitz matrices of the special type (\ref{26.10.18_1}).
Based on this formula, it is not hard to write an algorithm which calculates the hafnian of a matrix of order $n$  in $O(n)$ time. 
The resulting formula can be used to find the number of perfect matchings of special graphs, 
for example, complements of the path graphs.
Along the way, we established an intresting connection between the hafnian of Toeplitz matrices and the Bessel polynomials. 
This connection requires more detailed study. 
Also, one could try, using the above methods, to find effective analitical formulas for calculating hafnians
of other types of Toeplitz matrices.


\end{document}